\def\({\left(}
\def\){\right)}
\def\be {\begin{equation}}
\def\en{\end{equation}}
\let\nc\newcommand
\nc{\Ref}[1]{{\rm(\ref{#1})}}
\nc{\BB}{{A_{p,q}}} \nc{\bean}{\begin{eqnarray}} \nc{\eean}{\end{eqnarray}}
\nc{\bea}{\begin{eqnarray*}} \nc{\eea}{\end{eqnarray*}}
\def\cd{{\mathcal D}}
\def\cg{{\mathcal G}}
\def\cs{{\mathcal A}}
\theoremstyle{plain}
\newtheorem{thm}{Theorem}[section]   
\newtheorem*{thm*}{Theorem}          
\newtheorem*{cor*}{Corollary}        
\newtheorem{cor}[thm]{Corollary}     
\newtheorem{lem}[thm]{Lemma}         
\newtheorem{prop}[thm]{Proposition}  
\theoremstyle{definition}
\newtheorem*{rem*}{Remark}
\newtheorem*{ack*}{Acknowledgment}
\theoremstyle{remark}
\newtheorem{rem}[thm]{Remark}        %
\theoremstyle{definition}
\numberwithin{equation}{section}
\begin{document}

\title[Euler adic]
{The Euler adic dynamical system and path counts in the Euler graph}

\author{K. Petersen}
\address{Department of Mathematics, University of North Carolina
at Chapel Hill\\ Chapel Hill, NC 27599-3250, USA} \email{petersen@math.unc.edu}

\author{A. Varchenko}
\address{Department of Mathematics, University of North Carolina
at Chapel Hill\\ Chapel Hill, NC 27599-3250, USA} \email{anv@math.unc.edu}
\thanks{The research of
the second author was supported in part by NSF grant DMS-0555327}

\date{\today}

\keywords{adic transformation, invariant measure, ergodicity, Eulerian numbers} \subjclass{Primary:
37A05, 37A25, 05A10, 05A15; Secondary: 37A50, 37A55}

\begin{abstract}
We give a formula for generalized Eulerian numbers, prove monotonicity of sequences of certain ratios
of the Eulerian numbers,
and apply these results to obtain a new proof that the natural symmetric measure for the
Bratteli-Vershik {dynamical} system based on the Euler graph is the unique fully supported invariant
ergodic Borel probability measure. {Key ingredients of the proof are
  a two-dimensional induction argument and a one-to-one correspondence
  between most paths from two vertices at the same level to another
  vertex.}
\end{abstract}

\maketitle

\section{Introduction}
\label{sec_intro} The Euler graph is an infinite,
 directed graph with vertices $(i,j)$, $i,j\geq 0$,
with $j+1$ edges from $(i,j)$ to $(i+1,j)$ and $i+1$ edges from $(i,j)$ to $(i,j+1)$; see Figure
\ref{fig:eulergraph}. {The number $i+j$ is called the level of
  $(i,j)$.}

A {\em generalized Eulerian number} $A_{p,q}(i,j)$ is the number of
paths in the graph from $(p,q)$ to $(p+i,q+j)$. We prove that \bean
\label{f1} \BB(i,j)\ = \ \sum_{t=0}^i\,(-1)^{i-t}{p+q+t+1 \choose t}
{p+q+i+j+2 \choose i-t} (p+1+t)^{i+j}\ \eean and \bean \label{f2}
\frac{\BB(i,j+1)} {A_{p,q-1}(i,j+1)} \ \leq \
\frac{\BB(i,j)}{A_{p,q-1}(i,j)}\ \leq \frac{q+j}{q+1+j} \
\frac{\BB(i+1,j)}{A_{p,q-1}(i+1,j)}\ . \eean {As a corollary we show
that \bean \label{formula 1} \frac{\BB (i,j)}{A_{0,0}(p+i,q+j)}
\quad {\rm tends\ to} \quad {\frac1{(p+q+1)!}} \eean
 as both $i,j$ tend to infinity. }

{These results are motivated by continuing study of the adic dynamical
  system associated with the Euler graph. They yield a new proof of
  the fact that the natural symmetric measure for the Bratteli-Vershik
  dynamical system based on the Euler graph is the unique fully
  supported invariant ergodic Borel probability measure.}

The Euler adic system is a particularly interesting nonstationary
Bratteli-Vershik (or adic) system based on an infinite directed
graded graph with remarkable combinatorial properties. How such
systems arise from reinforced walks on graphs is explained in
\cite{FP2}. For the viewpoint of urn models, see for example
\cite{Flajolet2006}*{p. 68 ff.}. A first step in studying the Euler
adic system, or the associated $C^*$ algebra, is the identification
of the adic-invariant measures (sometimes called central measures or
traces), namely those that give equal measure to each cylinder set
determined by an initial path segment from the root vertex to
another fixed vertex. In \cite{BKPS} it was proved by a
supermartingale argument that the natural {\em
  symmetric measure}, which assigns equal measure to all cylinders of
 the same length, is ergodic. In \cite{FP} this result was strengthened
 by using a coding of paths by
 permutations to show that in fact the symmetric measure is the unique fully supported ergodic
 probability measure for this system.
{
 We found out recently that the paper
 \cite{Gnedin-O} contains related results, arrived at by different arguments and
 including also identification
 of all the other (partially supported) ergodic measures, and that a version of Formula
 (\ref{f1})
 appears in \cite{CarlitzScoville1974}. Although the connection with
 path counting is not made explicitly in \cite{CarlitzScoville1974},
 it seems that it is not too far from Formula (6.2) of that paper to
 our Formula (\ref{f1}).}
 When one seeks to study higher-dimensional versions of the Euler adic system, the
 coding by permutations is no longer available; therefore we have developed
 a proof via a different approach, which we present here.
 This proof also yields a stronger result, namely identification of the
 generic points for the symmetric measure and indeed a
 ``directional unique ergodicity" property such as
 was established in \cites{M,MP} for the Pascal adic system.

For background on adic systems, we refer to \cites{Vershik3, VL,KV, HPS,GPS,PS,M,MP,BKPS}. A Bratteli
diagram is an infinite directed graded graph. At level $0$ there is a single vertex, $R$, called the
{\em root}.  {At each level $n \geq 1$ there are finitely
  many vertices.}  There are edges only from vertices at level $n$ to
{vertices at} level $n+1$, for all $n$. Each vertex has only
finitely many edges leaving or entering it. { For nontriviality we
assume that
  each vertex has at least one edge leaving it and, except for $R$, at
  least one edge entering it.  The set of edges entering each vertex
  is ordered. Often when the diagram is drawn we assume that the edges
  are ordered from left to right.} The phase space $X$ of the
dynamical system based on the diagram is the set of infinite paths
that begin at $R$. {The space} $X$ is a compact metric space with the
distance between two paths that agree on exactly the first $n$ levels
being $1/2^n$. A {\em cylinder set} is { the set of all paths with a
  specified initial} segment of finite length.  {That length is called
  the {\em length} of the cylinder.}  Cylinder sets are open and closed and
form a base for the topology of $X$. We define a partial order {on the
  set $X$.}  Two paths $x$ and $y$ are {\em comparable} if they
coincide after some {level. In this case} we determine which of $x$
and $y$ is larger by comparing the last edges that differ. In this
partial order there is a set $X_{\max}$ of maximal paths {and a set}
$X_{\min}$ of minimal paths. The {\em adic transformation} $T: X
\setminus X_{\max} \to X \setminus X_{\min}$ is defined by letting
$Tx$ be the {smallest $y$ such that $y > x$}. Both $T$ and $T^{-1}$
are continuous where defined.

We recall quickly now how counting paths between vertices in a
Bratteli diagram is related to the identification of ergodic
invariant measures. For vertices $P, Q$ of a directed graph, denote
by $\dim(P,Q)$ {the number of paths from $P$ to $Q$.} Cylinder
{sets} determined by initial paths terminating at a common {vertex}
are mapped to one another by powers of $T$ and so {they} must be
assigned equal measure by any invariant measure. {For a path $x \in
X$, denote
  by $x_n$ the vertex of $x$ at level $n$.
  In the case of the Pascal and Euler graphs, we give $x_n$ the
 rectangular coordinates $(i_n,j_n)$.}
 It can be proved (see
\cites{Vershik1,VK}) by using either the Ergodic Theorem or Reverse
Martingale Theorem that if $\mu$ is a $T$-invariant ergodic Borel
probability measure on $X$ and $C$ is any cylinder set terminating
at a vertex $P$, then
\be \label{dimlim}
 {
 \mu(C)=\lim_{n \to \infty}
\frac{\dim(P,{x_n} )}{\dim(R,{x_n})} \quad\text{for $\mu$-almost
every}\ x \in X. }
 \en

{In this paper we show that if $\mu$ is a fully supported ergodic
  measure} for
{the adic system on the Euler graph}, then for $\mu$-almost every $x$ the limit in
(\ref{dimlim}) {has the same value for any two cylinders of
  the same length.  Consequently we show that } there is only one
fully supported ergodic $T$-invariant measure for the Euler adic
system, namely, the symmetric measure, which assigns equal measure to
all cylinder sets of a given length. {In particular, if $n$ is the
  length of a cylinder $C$, then $\mu(C)=1/(n+1)!$\,.}

\begin{thm}
\label{mainth} In the Euler graph, for each vertex $P$ the limit
\bean \label{Dimlim} \lim_{n \to
\infty} \frac{\dim(P,(i_n,j_n))}{\dim(R,(i_n,j_n))}
\eean
 exists for all infinite paths $(i_n,j_n),n \geq 0,$
 for which $i_n$ and $j_n$ are
 unbounded. Moreover, this limit is
constant as $P$ varies over the vertices at any fixed level.
\end{thm}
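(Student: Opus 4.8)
The plan is to read the theorem off directly from the convergence statement \Ref{formula 1}, once the ratio of path counts is rewritten in terms of generalized Eulerian numbers. Write $R=(0,0)$ and $P=(p,q)$, so that the level of $P$ is $p+q$. By the definition of the generalized Eulerian numbers, the number of paths from $R$ to the vertex $(i_n,j_n)$ is $\dim(R,(i_n,j_n))=A_{0,0}(i_n,j_n)$, while the number of paths from $P$ to $(i_n,j_n)$ is $\dim(P,(i_n,j_n))=A_{p,q}(i_n-p,\,j_n-q)$, the latter being defined and positive as soon as $i_n\ge p$ and $j_n\ge q$.

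First I would record the elementary fact that along any infinite path in the Euler graph each edge increases exactly one of the two coordinates by $1$, so the sequences $(i_n)$ and $(j_n)$ are non-decreasing and satisfy $i_n+j_n=n$. Consequently, if $i_n$ and $j_n$ are unbounded then they both tend to $\infty$; in particular $i_n\ge p$ and $j_n\ge q$ for all large $n$, so the ratio in \Ref{Dimlim} is eventually well defined.

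Next, setting $i=i_n-p$ and $j=j_n-q$ gives $p+i=i_n$ and $q+j=j_n$, both tending to $\infty$, and the ratio becomes
\[
\frac{\dim(P,(i_n,j_n))}{\dim(R,(i_n,j_n))}=\frac{A_{p,q}(i,j)}{A_{0,0}(p+i,q+j)}.
\]
By \Ref{formula 1} this quantity tends to $1/(p+q+1)!$ as $i,j\to\infty$, and since $i,j\to\infty$ along our path, the limit in \Ref{Dimlim} exists and equals $1/(p+q+1)!$. Because this value depends on $P$ only through $p+q$, the level of $P$, it is the same for every vertex at a fixed level $\ell$, namely $1/(\ell+1)!$; this is the asserted constancy.

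The genuine work is therefore upstream, in the corollary \Ref{formula 1} and, beneath it, in the monotonicity \Ref{f2}. The main obstacle is to prove \Ref{f2} --- the abstract signals a two-dimensional induction and a near-bijection between paths --- and then to extract from it the exact limiting constant $1/(p+q+1)!$, for which the explicit formula \Ref{f1} is the natural tool. Once \Ref{formula 1} is in hand, the theorem itself is the short translation above; the only points needing care are the monotonicity of $(i_n)$ and $(j_n)$ along a path and the observation that \Ref{formula 1} is a genuine double limit, valid for $i,j\to\infty$ in any fashion and hence along the given path.
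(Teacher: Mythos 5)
Your proposal is correct and takes essentially the same route as the paper, which explicitly derives Theorem \ref{mainth} as a corollary of Formula \Ref{formula 1} (proved there as Theorem \ref{last thm} via the good-path arguments); your write-up merely fills in the routine translation $\dim(R,(i_n,j_n))=A_{0,0}(i_n,j_n)$, $\dim(P,(i_n,j_n))=A_{p,q}(i_n-p,\,j_n-q)$, together with the observation that unboundedness of $i_n,j_n$ along a path forces both to tend to infinity so that the double limit applies. You also correctly locate the substantive work upstream, in the proof of \Ref{formula 1} itself.
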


 {Theorem \ref{mainth}, the new proof of which is
the main point of this paper, is a corollary of Formula \Ref{formula
1}.}

{In Section \ref{sec-graph} we introduce the recurrence relations for
 generalized Eulerian numbers. In
Sections \ref{sec rec relation}  and \ref{sec_mon} we prove Formulas \Ref{f1} and \Ref{f2},
respectively. In Section \ref{Sec limit theorems} we prove that
\bean \label{f4} \frac{\BB
(i,j)}{A_{p,q-1}(i,j)} \to \infty \qquad \text{ and} \qquad \frac{\BB (i,j)}{A_{p-1,q}(i,j)} \to \infty
\eean
 as both $i,j$ tend to infinity.}

 In Section \ref{sec good}, we consider
the set $\cs_{p,q}(i,j)$ of all paths from $(p,q)$ to $(p+i,q+j)$ and introduce a subset
 $\cg_{p,q}(i,j)\subset \cs_{p,q}(i,j)$ of ``good" paths.
 {
 Good paths are defined to be those which use each of a particular set
 of labels at least once (see Section \ref{sec good}). These
 are designed to substitute, in a way that will extend to
 higher-dimensional Euler adic systems, for the path-coding
 permutations in the two-dimensional case with
 only singleton ``clusters" in \cite{FP} which are predominant.}
 We show that almost all paths are good
asymptotically as both $i,j$ tend to infinity.

{ In Section \ref{sec identif} we show that the number of good paths from $(p,q)$ to
$(i,j)$ is equal to the number of good paths from
$(p',q')$ to $(i,j)$, if $p+q=p'+q'$ and $i,j \geq
p+q+1$. This will complete the proof of Formula \Ref{formula 1} and Theorem \ref{mainth}.}

\begin{ack*}
 {
{ The authors thank} Sarah Bailey Frick and Xavier M\' ela for
conversations on this topic and Thomas Prellberg for finding
reference \cite{CarlitzScoville1974}.}
\end{ack*}

\section{The Euler graph}
\label{sec-graph} The Euler graph is an infinite, directed graph. The vertices of the graph are labeled
by pairs of nonnegative integers $(i,j)$. The list of edges is given by the rule: for any $(i,j)$ there
are $j+1$ edges from $(i,j)$ to $(i+1,j)$ and $i+1$ edges  from $(i,j)$ to $(i,j+1)$; see Figure
\ref{fig:eulergraph}.

\begin{figure}[h]
  \scalebox{.60}{\includegraphics{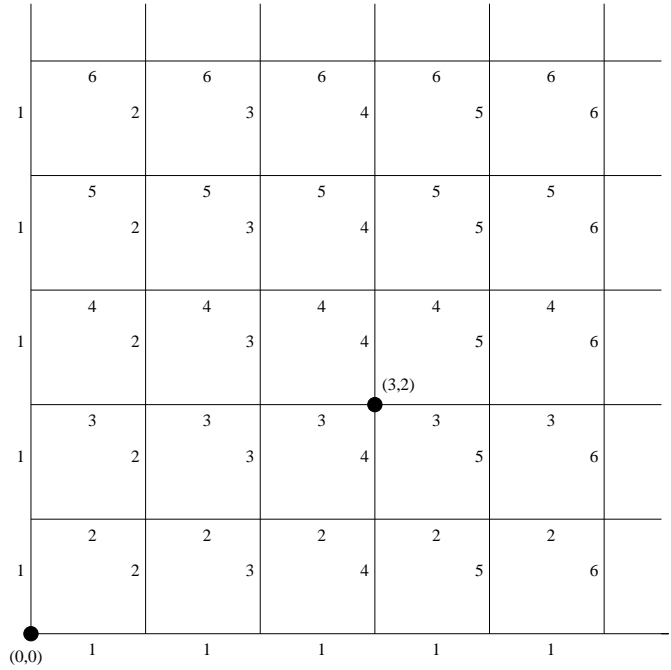}}
  \caption{The first part of the Euler graph.
  Numbers along edges indicate multiple edges.}
  \label{fig:eulergraph}
  \end{figure}

 Define a {\em generalized Eulerian number} $A_{p,q}(i,j)$ to be
the number of paths in the graph from $(p,q)$ to $(p+i,q+j)$.  For fixed $p,q$, the
 numbers satisfy the recurrence relation
\be
\label{erel}
\begin{gathered}
A_{p,q}(i,j)\ =\
(j+q+1)A_{p,q}(i-1,j)\ +\ (i+p+1)A_{p,q}(i,j-1)
\end{gathered}
\en and initial conditions \bean \label{initial} A_{p,q}(i,0) \ = \ (q+1)^{i}\ , \qquad A_{p,q}(0,j)\ =
\ (p+1)^{j}\ , \qquad i,j > 0\ . \phantom{aa} \eean A closed form  is known  \cite{Comtet} for
$A_{0,0}(i,j)$:
 \bea
 A_{0,0}(i,j)\ =\ \sum_{t=0}^i(-1)^{i-t}\, \binom{i+j+2}{i-t} (1+t)^{i+j+1}\ .
\eea
We develop a closed form for all  $A_{p,q}(i,j)$ in the next section.

\section{{The} recurrence relation}
\label{sec rec relation}

We say that a collection of numbers { $\{A(i,j), \ i,j\geq 0,\, (i,j)\neq (0,0)\}$}
 satisfies recurrence relation
\Ref{erel}, if the relation
 holds for any $i,j>0$. The numbers
\bea A(i,0), \ {} A(0,j)\quad {\rm with} \quad i,j > 0 \eea
 will be called the {\em initial conditions}
of the collection.

It is clear that for any collection of numbers {$\{a(i,0),\ a(0,j), \,i,j\geq 0,\, (i,j)\neq (0,0)\}$,}
there exists a unique collection {$\{A(i,j), \ i,j\geq 0,\, (i,j)\neq (0,0)\}$} satisfying the
recurrence relation \Ref{erel} and initial conditions
\bea A(i,0)=a(i,0)\ , \qquad A(0,j)=a(0,j)\ .
\eea

\begin{thm}
\label{Bount nw} The collection {$\{\BB(i,j),\ i,j\geq 0,\, (i,j)\neq (0,0)\} $ } satisfying the
recurrence relation \Ref{erel} and initial conditions \Ref{initial} is
given by the following formula:

\be \label{BB nw} \BB(i,j)\ = \ \sum_{t=0}^i\,(-1)^{i-t}{p+q+t+1 \choose t} {p+q+i+j+2 \choose i-t}
(p+1+t)^{i+j}\ .
\en
\end{thm}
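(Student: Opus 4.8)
The plan is to exploit the uniqueness statement just recorded: a collection obeying \Ref{erel} together with prescribed initial conditions is uniquely determined, so it suffices to check that the right-hand side of \Ref{BB nw}, which I will call $\tilde A_{p,q}(i,j)$, both (i) reproduces the initial conditions \Ref{initial} and (ii) satisfies the recurrence \Ref{erel} for all $i,j>0$. Throughout I write $N=p+q$, so that a typical summand carries the three factors $\binom{N+t+1}{t}$, $\binom{N+i+j+2}{i-t}$, and $(p+1+t)^{i+j}$.

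For the recurrence I would substitute the formula for each of $\tilde A_{p,q}(i,j)$, $\tilde A_{p,q}(i-1,j)$, $\tilde A_{p,q}(i,j-1)$ and try to match coefficients index by index in $t$. The two terms on the right of \Ref{erel} both carry $(p+1+t)^{i+j-1}$ and a binomial with upper entry $N+i+j+1$, while the left carries $(p+1+t)^{i+j}=(p+1+t)(p+1+t)^{i+j-1}$ and $\binom{N+i+j+2}{i-t}$. After extending the range of the $\tilde A_{p,q}(i-1,j)$-sum harmlessly to $t=i$ (the extra binomial vanishes there), everything becomes a single sum over $t=0,\dots,i$ against the common weight $(-1)^{i-t}\binom{N+t+1}{t}(p+1+t)^{i+j-1}$, so it is enough to show that the scalar multiplier of that weight vanishes for each $t$. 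Writing $M=N+i+j+1$ and $k=i-t$, this multiplier is $(p+1+t)\binom{M+1}{k}-(i+p+1)\binom{M}{k}+(j+q+1)\binom{M}{k-1}$; I expect Pascal's rule $\binom{M+1}{k}=\binom{M}{k}+\binom{M}{k-1}$ to collapse it to $-k\binom{M}{k}+(M-k+1)\binom{M}{k-1}$, which is identically zero by the absorption identity $k\binom{M}{k}=M\binom{M-1}{k-1}=(M-k+1)\binom{M}{k-1}$. Thus the recurrence should hold term by term, which is the pleasant part of the argument.

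The initial conditions split into an easy half and the main obstacle. The column $i=0$ is immediate: the sum collapses to its single $t=0$ term $\binom{N+1}{0}\binom{N+j+2}{0}(p+1)^{j}=(p+1)^{j}$, as required. The row $j=0$ demands the genuine combinatorial identity $\sum_{t=0}^{i}(-1)^{i-t}\binom{N+t+1}{t}\binom{N+i+2}{i-t}(p+1+t)^{i}=(q+1)^{i}$, and this is where I expect the real difficulty to lie, since $\binom{N+i+2}{i-t}$ couples $i$ and $t$ and the power $(p+1+t)^{i}$ blocks any naive term-by-term cancellation. My plan is to regard both sides as polynomials of degree $i$ in the single variable $u=p+1$ (with $N$ held fixed, so $q+1=N+2-u$) and to match all $i+1$ coefficients. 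The key device is $\binom{N+t+1}{t}=(-1)^{t}\binom{-(N+2)}{t}$, which turns the weights into a Vandermonde-type convolution $\sum_t\binom{-(N+2)}{t}\binom{N+i+2}{i-t}y^{t}=[z^{i}](1+z)^{N+i+2}(1+yz)^{-(N+2)}$; expanding $(u+t)^{i}$ in powers of $u$ then reduces each coefficient to such a convolution. As a sanity check, the top-degree coefficients already agree through $\sum_t(-1)^{i-t}\binom{N+t+1}{t}\binom{N+i+2}{i-t}=(-1)^{i}$, matching the leading term of $(N+2-u)^{i}$. A possibly cleaner alternative is induction on $i$, verifying that increasing $i$ by one multiplies the left-hand side by exactly $q+1$. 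Either way this boundary identity, rather than the recurrence, is the crux; once it is in hand, uniqueness identifies $\tilde A_{p,q}$ with $A_{p,q}$ and the proof is complete.
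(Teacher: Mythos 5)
Your skeleton coincides with the paper's: invoke uniqueness of the solution of \Ref{erel} with prescribed initial conditions, then check that the right-hand side of \Ref{BB nw} satisfies both. Your verification of the recurrence is correct and complete: with $M=p+q+i+j+1$ and $k=i-t$, the term-by-term multiplier $(p+1+t)\binom{M+1}{k}-(i+p+1)\binom{M}{k}+(j+q+1)\binom{M}{k-1}$ does collapse via Pascal's rule to $-k\binom{M}{k}+(M-k+1)\binom{M}{k-1}=0$, and extending the $A_{p,q}(i-1,j)$-sum to $t=i$ is harmless. (The paper packages this same computation as Lemma \ref{lem on b}, asserting that each basis collection $a_t(i,j)$ satisfies \Ref{erel} ``by direct verification,'' so your version is actually the more explicit one.) The column $i=0$ is also fine. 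At this point you and the paper face exactly the same crux, namely the identity \Ref{main}: $\sum_{t=0}^i(-1)^{i-t}\binom{p+q+t+1}{t}\binom{p+q+i+2}{i-t}(p+1+t)^i=(q+1)^i$.

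It is in the proof of this identity that your proposal has a genuine, though fillable, gap. Expanding $(u+t)^i$ in powers of $u$ (where $u=p+1$, $N=p+q$) produces as coefficient of $u^m$ the sum $\binom{i}{m}\sum_t(-1)^{i-t}\binom{N+t+1}{t}\binom{N+i+2}{i-t}\,t^{\,i-m}$; because of the weight $t^{\,i-m}$ this is \emph{not} a Vandermonde convolution, and your generating function $[z^i](1+z)^{N+i+2}(1+yz)^{-(N+2)}$, which handles weights $y^t$, does not evaluate it directly (your sanity check is precisely the one case $m=i$ where the weight disappears). One way to finish along your lines: expand $t^s$ in falling factorials $t(t-1)\cdots(t-r+1)$; since $\binom{-N-2}{t}\,t(t-1)\cdots(t-r+1)=(-N-2)(-N-3)\cdots(-N-1-r)\binom{-N-2-r}{t-r}$, Vandermonde gives $\sum_t\binom{-N-2}{t}\binom{N+i+2}{i-t}\,t(t-1)\cdots(t-r+1)=(-N-2)(-N-3)\cdots(-N-1-r)$, and hence by linearity $\sum_t\binom{-N-2}{t}\binom{N+i+2}{i-t}P(t)=P(-N-2)$ for every polynomial $P$ of degree at most $i$. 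Applying this with $P(t)=(u+t)^i$ proves \Ref{main} in one stroke: the left side equals $(-1)^i(u-N-2)^i=(N+2-u)^i=(q+1)^i$, with no coefficient matching at all. The paper's own route (Lemma \ref{lem on costs}) is shorter still and worth knowing: both sides of \Ref{main} are polynomials in $q$ of degree $i$, so it suffices to check equality at the $i+1$ points $q=-(p+2),\dots,-(p+2+i)$; at $q=-(p+2+t)$ every summand but one vanishes, and the survivor equals $(-1)^i(p+1+t)^i$, which is the value of the right side there. Finally, I would drop your fallback of induction on $i$: incrementing $i$ changes every binomial in the sum simultaneously, and there is no visible mechanism producing the factor $q+1$.
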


\begin{proof}

Let $t=0,1,\dots$. For $i,j \geq 0$, define the numbers \bea a_t(i,j)\ &=&\ (-1)^{i-t} {p+q+i+j+2
\choose i-t} (p+1+t)^{i+j}
\\
&=&\ (-1)^{i-t}
\frac{\Gamma(p+q+i+j+3)} {\Gamma(i-t+1)\Gamma(p+q+j+t+3)}
(p+1+t)^{i+j}
\ , \eea
where $\Gamma(x)$ is
Euler's gamma function.
The last expression,  in particular, says that
\bea \label{a zero}
a_t(i,j)\ =\
0 \qquad {\rm if} \quad j\geq 0,\quad t> i\geq 0\ .
\eea

\begin{lem}
\label{lem on b} For any $t\geq 0$, the collection
{$\{a_t(i,j),\ i,j\geq
0,\,(i,j)\neq(0,0)\}$}
 satisfies recurrence relation
\Ref{erel} with the initial conditions
\bea
 a_t(0,j)\ &=&\ 0\  \quad {\rm if}\ t>0 \ ,
\quad\quad\phantom{a} a_t(0,j)\ =\ (p+1)^j\  \quad {\rm if}\ t=0\ ,
\\
a_t(i,0)\ &=&\ (-1)^{i-t} {p+q+i+2 \choose i-t} (p+1+t)^{i}
\\
& =&
\
(-1)^{i-t} \frac{\Gamma(p+q+i+3)}
{\Gamma(i-t+1)\Gamma(p+q+t+3)} (p+1+t)^{i}\ ,
\eea
for all $i,j>0$.
\end{lem}
The lemma is proved by direct verification.

\medskip

By Lemma \ref{lem on b}  any linear combination\ $\{\sum_{t= 0}^\infty\, C_t \,a_t(i,j),\ {}
 i,j\geq 0\}$
is well defined and satisfies relation \Ref{erel}. Therefore,
let us look for $\BB(i,j)$ in the form
\bea
\BB(i,j)\ = \ \sum_{t=0}^\infty\,C_t\,(-1)^{i-t}
 {p+q+j+i+2 \choose i-t} (p+1+t)^{i+j} \ .
\eea The constants $C_t$ can be found from the equations
\bea \sum_{t=0}^i\,C_t\,(-1)^{i-t} {p+q+i+2
\choose i-t} (p+1+t)^{i} \ =\ (q+1)^i\ . \eea

\begin{lem}
\label{lem on costs} For all $t\geq 0$, \bea C_t\ = \ {p+q+t+1\choose t}\ . \eea
\end{lem}
\begin{proof}
We need to show that for any $i>0$, \bean \label{main} \sum_{t=0}^i\,\,(-1)^{i-t} {p+q+t+1\choose t}
{p+q+i +2\choose i-t} (p+1+t)^{i} \ =\ (q+1)^i\ .
\eean
The right and left sides of \Ref{main} are
polynomials in
$q$ of degree $i$. To see that the two polynomials are equal it is enough to check that
they are equal at \bea q \ =\  -(p+2),\
-(p+3),\ \dots\ ,\ -(p+2+i)\ . \eea
If $q=-(p+2+t)$ for $0\leq
t\leq i$,
\ then the left side has exactly one nonzero summand. Moreover, that nonzero summand equals
the right-hand side polynomial at $q=-(p+2+t)$.
More precisely, denote by $L(q)$ and $R(q)$ the left
and right sides of \Ref{main}. Then
\bea
L(q)\ = \ (\prod_{j=0}^{i} \,(p+q+j+2)\,) \,
\sum_{t=0}^i\,\,(-1)^{i-t} \,\frac 1 {p+q+t+2}\,
\frac 1{t!\,(i-t)!}\, (p+1+t)^{i} \ . \eea
Hence, for
$0\leq t\leq i$, we have
\bea L(-p-2-t)\ = \ (-1)^t \,t!\,(i-t)!\, \,(-1)^{i-t} \, \frac
1{t!\,(i-t)!}\, (p+1+t)^{i} \ = \ (-1)^i\,(p+1+t)^{i} \ .
\eea
 But $R(-p-2-t) =   (-p-1-t)^i.$
\end{proof}
Theorem \ref{Bount nw} follows from Lemma \ref{lem on costs}.
\end{proof}

\begin{cor}
\label{cor interchange i,j}
The collection $\{\BB(i,j),\ i,j\geq 0\}$ satisfying the recurrence
relation \Ref{erel} and initial conditions \Ref{initial} is given also by the following formula:
\be
\label{B interchange i,j} \BB(i,j)\ = \ \sum_{t=0}^j\,(-1)^{j-t}{p+q+t+1 \choose t}
{p+q+i+j+2 \choose
j-t} (q+1+t)^{i+j} .
\en
\end{cor}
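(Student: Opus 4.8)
The plan is to exploit the evident reflection symmetry of the Euler graph across the diagonal. Reflecting the graph in the line $i=j$ carries each vertex $(i,j)$ to $(j,i)$, and since there are $j+1$ edges from $(i,j)$ to $(i+1,j)$ and $i+1$ edges from $(i,j)$ to $(i,j+1)$, this reflection interchanges the two edge multiplicities consistently. Consequently it sets up a bijection between paths from $(p,q)$ to $(p+i,q+j)$ and paths from $(q,p)$ to $(q+j,p+i)$, which should yield the identity $\BB(i,j)=A_{q,p}(j,i)$. Once this symmetry is in hand, the corollary follows by simply feeding the right-hand side into Theorem \ref{Bount nw}.

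To make the symmetry rigorous without appealing to the picture, I would verify the identity directly from the defining data, using the uniqueness statement recorded just before Theorem \ref{Bount nw}. Set $B(i,j)=A_{q,p}(j,i)$. Substituting into the recurrence \Ref{erel} written for the pair $(q,p)$ and relabeling, one checks that $B$ satisfies $B(i,j)=(j+q+1)B(i-1,j)+(i+p+1)B(i,j-1)$, which is exactly relation \Ref{erel} for the pair $(p,q)$. For the boundary data, the initial conditions \Ref{initial} applied with $p$ and $q$ interchanged give $B(i,0)=A_{q,p}(0,i)=(q+1)^i$ and $B(0,j)=A_{q,p}(j,0)=(p+1)^j$, which coincide with the initial conditions \Ref{initial} for $\BB$. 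By uniqueness of a collection satisfying \Ref{erel} with prescribed initial conditions, we conclude $B=\BB$, that is, $\BB(i,j)=A_{q,p}(j,i)$.

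Finally I would apply Theorem \ref{Bount nw} to the right-hand side. Replacing the four parameters $(p,q,i,j)$ in formula \Ref{BB nw} by $(q,p,j,i)$ gives
\[
A_{q,p}(j,i)\ =\ \sum_{t=0}^j(-1)^{j-t}\binom{q+p+t+1}{t}\binom{q+p+j+i+2}{j-t}(q+1+t)^{j+i},
\]
and since $p+q=q+p$ and $i+j=j+i$ this is precisely the asserted formula \Ref{B interchange i,j}. There is no analytic obstacle here; the only point requiring care is the bookkeeping in checking that the initial conditions survive the interchange of $p$ and $q$. This is exactly the place where the apparent asymmetry between the two base coordinates could have spoiled the argument, but because the two initial-condition formulas in \Ref{initial} are themselves swapped by $p\leftrightarrow q$, the symmetry holds intact.
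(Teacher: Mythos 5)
Your proposal is correct and follows essentially the same route as the paper: the published proof simply cites the symmetry of the recurrence \Ref{erel} and initial conditions \Ref{initial} under the transformation $(p,q,i,j)\to(q,p,j,i)$, which is exactly the identity $\BB(i,j)=A_{q,p}(j,i)$ that you verify in detail via the uniqueness of the collection determined by \Ref{erel} and its initial data. Your write-up just makes explicit the bookkeeping the paper leaves to the reader.
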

\begin{proof}
The corollary follows from Theorem \ref{Bount nw}
 due to the symmetry of \Ref{erel} {and} \Ref{initial}
with respect to the transformation $(p,q,i,j) \to (q,p,j,i)$.
\end{proof}

\section{Monotonicity of ratios}
\label{sec_mon}

\begin{thm}
\label{thm monotonicity} For $p,q\geq 0$, assume that a collection of positive numbers
{$\{a(i,j),\ i,j\geq 0,\,(i,j)\neq(0,0)\}$}
 satisfies
\bea
a(i,j)\ =\ (j+q+1)a(i-1,j)\ +\ (i+p+1)a(i,j-1) \eea and a collection of positive numbers
{$\{b(i,j),\ i,j\geq 0,\,(i,j)\neq(0,0)\}$}
 satisfies
\bea
b(i,j)\ =\ (j+q)b(i-1,j)\ +\ (i+p+1)b(i,j-1) . \eea Assume that the initial conditions of these
collections satisfy the inequalities: \bean \label{initial ineq} \frac{a(0,j+1)}{b(0,j+1)} & \leq &
\frac{a(0,j)}{b(0,j)}\ , \qquad
 \phantom{aaaaaaaaa}
j>0\ ,
\\
\frac{a(0,1)}{b(0,1)} & \leq & \frac{q}{q+1} \ \frac{a(1,0)}{b(1,0)} \ , \notag
\\
\frac{a(i,0)}{b(i,0)} & \leq & \frac{q}{q+1} \ \frac{a(i+1,0)}{b(i+1,0)}\ , \qquad \ {} i>0\ . \notag
\eean Then for any $i,j\geq 0$ we have \bean \label{monot ineq} \frac{a(i,j+1)}{b(i,j+1)} \ \leq \
\frac{a(i,j)}{b(i,j)}\ \leq \frac{q+j}{q+1+j} \ \frac{a(i+1,j)}{b(i+1,j)}\ . \eean
\end{thm}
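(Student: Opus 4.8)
The plan is to prove the double inequality \Ref{monot ineq} by a two-dimensional induction on $(i,j)$, with the initial conditions \Ref{initial ineq} serving as the base cases along the two coordinate axes. The crucial structural observation is that both $a$ and $b$ satisfy a common-type recurrence, differing only in the coefficient of the ``horizontal'' term: $a$ uses $(j+q+1)$ while $b$ uses $(j+q)$. This small discrepancy is precisely what produces the extra factor $\tfrac{q+j}{q+1+j}$ on the right-hand side, so I expect the bookkeeping of that factor to be the delicate part.

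\medskip

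First I would reformulate each of the two desired inequalities as a statement comparing the ``cross-ratios'' $a(i,j)\,b(i',j')$ versus $a(i',j')\,b(i,j)$, since clearing denominators turns the rational inequalities into polynomial ones that interact cleanly with the recurrences. Concretely, the left inequality $\tfrac{a(i,j+1)}{b(i,j+1)}\le\tfrac{a(i,j)}{b(i,j)}$ is equivalent to $a(i,j+1)\,b(i,j)\le a(i,j)\,b(i,j+1)$, and similarly for the right. I would then substitute the recurrences for $a(i,j)$ and $b(i,j)$ (and for the shifted arguments) into these cross-products and attempt to express the result as a nonnegative combination of the inductive hypotheses at the strictly smaller indices $(i-1,j)$, $(i,j-1)$, and their neighbors. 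Because all the coefficients $(j+q+1)$, $(i+p+1)$, $(j+q)$ are positive, each such combination is a sum of products of positive numbers with inequalities that hold by induction, so the target inequality is inherited.

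\medskip

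The main obstacle will be verifying that, after expanding the cross-products, the ``error'' terms arising from the mismatch between the coefficient $(j+q+1)$ in the $a$-recurrence and $(j+q)$ in the $b$-recurrence assemble into exactly the slack permitted by the factor $\tfrac{q+j}{q+1+j}$ rather than overshooting it. I anticipate that the right-hand inequality and the left-hand inequality must be carried \emph{simultaneously} through the induction: when one expands $a(i,j)$ and $b(i,j)$ via the recurrence, the horizontal piece requires the right inequality at level $(i-1,j)\to(i,j)$ while the vertical piece requires the left inequality at $(i,j-1)\to(i,j)$, so neither can be proved in isolation. I would therefore set up a single inductive statement asserting both halves of \Ref{monot ineq} at every $(i,j)$, order the induction by the level $i+j$, and feed each half into the verification of the other at the next level.

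\medskip

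Finally, I would dispatch the base cases from \Ref{initial ineq}: the chain $\tfrac{a(0,j+1)}{b(0,j+1)}\le\tfrac{a(0,j)}{b(0,j)}$ gives the left inequality on the vertical axis, the analogous monotonicity with the $\tfrac{q}{q+1}$-factor gives the right inequality on the horizontal axis, and the single linking hypothesis relating $(0,1)$ to $(1,0)$ bridges the two axes so that the induction can turn the first interior corner. The passage from the axis data to the interior should then be purely mechanical once the simultaneous inductive scheme above is in place; the content of the argument is entirely in matching the coefficient mismatch to the prescribed factor, and I would treat the remaining algebra as routine positivity checks.
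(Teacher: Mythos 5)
Your proposal takes essentially the same route as the paper's proof: a two-dimensional induction ordered by level $i+j$ that carries both halves of \Ref{monot ineq} simultaneously (the paper in fact feeds in only their composite, $\frac{a(i,j+1)}{b(i,j+1)} \le \frac{q+j}{q+1+j}\,\frac{a(i+1,j)}{b(i+1,j)}$), with the axis inequalities \Ref{initial ineq} as base cases and the $(0,1)$--$(1,0)$ hypothesis turning the corner, exactly as you describe. The algebra you defer does close as you anticipate: clearing denominators and substituting the recurrences cancels the common term, and each target inequality reduces to the composite hypothesis at the previous level together with the elementary monotonicity $\frac{q+j}{q+1+j} \le \frac{q+1+j}{q+2+j}$.
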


\begin{cor}
\label{cor monot} The generalized Eulerian numbers satisfy the inequalities
\bean \label{euler ineq}
\frac{\BB(i,j+1)}{A_{p,q-1}(i,j+1)} \ \leq \ \frac{\BB(i,j)}{A_{p,q-1}(i,j)}\ \leq \frac{q+j}{q+1+j} \
\frac{\BB(i+1,j)}{A_{p,q-1}(i+1,j)}\
\eean
for all $p,q-1, i,j\geq 0$.
\end{cor}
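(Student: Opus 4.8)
The plan is to apply Theorem~\ref{thm monotonicity} directly, taking $a(i,j)=\BB(i,j)$ and $b(i,j)=A_{p,q-1}(i,j)$. First I would check that these two collections satisfy the pair of recurrences required by the theorem. Since $\BB$ satisfies \Ref{erel}, we have $\BB(i,j)=(j+q+1)\BB(i-1,j)+(i+p+1)\BB(i,j-1)$, which is verbatim the $a$-recurrence of Theorem~\ref{thm monotonicity}. Substituting $q-1$ for $q$ in \Ref{erel} gives $A_{p,q-1}(i,j)=(j+q)A_{p,q-1}(i-1,j)+(i+p+1)A_{p,q-1}(i,j-1)$, which is exactly the $b$-recurrence. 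Both collections consist of positive numbers, since they count paths in the Euler graph and the standing hypothesis $q-1\geq 0$ forces $q\geq 1$, so that in particular the boundary values of $b$ are nonzero.

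Next I would verify the three initial-condition inequalities \Ref{initial ineq}. From the initial conditions \Ref{initial} one reads off $\BB(0,j)=A_{p,q-1}(0,j)=(p+1)^j$, together with $\BB(i,0)=(q+1)^i$ and $A_{p,q-1}(i,0)=q^i$. Consequently the first inequality of \Ref{initial ineq} becomes $1\leq 1$; the second becomes $1\leq \frac{q}{q+1}\cdot\frac{q+1}{q}$; and the third becomes $\frac{(q+1)^i}{q^i}\leq \frac{q}{q+1}\cdot\frac{(q+1)^{i+1}}{q^{i+1}}$. In each case the two sides coincide, so all three hold (with equality). The hypotheses of Theorem~\ref{thm monotonicity} are therefore met, and its conclusion \Ref{monot ineq} is precisely the chain \Ref{euler ineq} asserted by the corollary.

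I do not anticipate any genuine obstacle: the corollary is a clean specialization of Theorem~\ref{thm monotonicity}, and the only point that needs attention is the mild boundary issue, namely confirming that $q\geq 1$ so that $A_{p,q-1}$ is defined and has strictly positive initial conditions. All of the analytic content lives in the already-established Theorem~\ref{thm monotonicity}; the present step is purely a matter of matching recurrences and checking that the required initial inequalities in fact degenerate to equalities.
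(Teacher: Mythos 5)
Your proposal is correct and is exactly the argument the paper intends: the corollary is stated as a direct specialization of Theorem \ref{thm monotonicity} with $a=A_{p,q}$ and $b=A_{p,q-1}$, whose recurrences and initial conditions (all degenerating to equalities, as you note) are precisely what you verified.
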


\medskip
\noindent {\it Proof of Theorem \ref{thm monotonicity}.} The proof is
by induction. Denoting \bea x=(i,j+1) , &\qquad & w=(i+1,j+1) ,
\\ y=(i,j) , \phantom{aaa} &\qquad & z=(i+1,j) , \eea we assume that
\bean \label{1} \frac {a(x)}{b(x)}\ \leq\ \frac {a(y)}{b(y)} \ \leq \ \frac{q+j}{q+1+j}\ \frac
{a(z)}{b(z)}\ \eean and prove \bean \label{2} \frac {a(w)}{b(w)}\ \leq\ \frac {a(z)}{b(z)} \quad {\rm
and} \quad \frac
      {a(x)}{b(x)} \ \leq\ \frac{q+1+j}{q+2+j}\ \frac
      {a(w)}{b(w)}\ . \eean In fact, we will use not \Ref{1} but its
      corollary \bean \label{3} \frac {b(z)}{b(x)}\ \leq
      \frac{q+j}{q+1+j}\ \frac {a(z)}{a(x)}\ . \eean We will use also
      the recurrence relations \bea a(w)\ &=& \ (q+2+j)\,
      a(x)\ +\ (p+2+i)\, a(z)\ , \\ b(w)\ &=& \ (q+1+j)
      \,b(x)\ +\ (p+2+i)\, b(z)\ . \eea

To prove the first inequality in \Ref{2}, we need to prove that \bea \frac{a(w)}{a(z)}\ =\
(q+2+j)\,\frac {a(x)}{a(z)}\ + \ (p+2+i)\, \ \leq \ \frac{b(w)}{b(z)}\ =\ (q+1+j)\,\frac {b(x)}{b(z)}\
+ \ (p+2+i)\, \eea or \bean \label{4} \frac {a(x)}{a(z)}\ \ \leq \ \frac{q+1+j}{q+2+j} \cdot \frac
{b(x)}{b(z)}\ . \eean
Using \Ref{3} we write \bea \frac {a(x)}{a(z)}\ \leq \frac{q+j}{q+1+j} {\cdot} \frac {b(x)}{b(z)}\
\leq\ \frac{q+1+j}{q+2+j}\cdot \frac {b(x)}{b(z)}, \eea and this gives  \Ref{4}.

To prove the second inequality in \Ref{2}, we write \bea \frac{a(w)}{a(x)}\ =\ (q+2+j)\ + \
(p+2+i)\,\frac {a(z)}{a(x)} \eea and \bea \frac{q+2+j}{q+1+j}\cdot \frac{b(w)}{b(x)}\ =\ (q+2+j)\ +\
(p+2+i)\,\frac{q+2+j} {q+1+j} \cdot\frac {b(z)}{b(x)}\ . \eea Using inequality \Ref{3} we continue \bea
 &&
(q+2+j) + (p+2+i)\cdot\frac{q+2+j}{q+1+j}\cdot\frac {b(z)}{b(x)}\
 \\
&& \phantom{aaa} \leq (q+2+j) + (p+2+i)\cdot\frac{q+2+j}{q+1+j}\cdot \frac{q+j}{q+1+j}\cdot \frac
{a(z)}{a(x)}\
\\
 &&
\phantom{aaa} \leq \ (q+2+j) + (p+2+i)\cdot \frac {a(z)}{a(x)} \ = \ \frac {a(w)}{a(x)}\ . \eea Thus,
we get \bea \frac{q+2+j}{q+1+j}\cdot \frac{b(w)}{b(x)}\ \leq\ \frac {a(w)}{a(x)}\ , \eea which is the
second inequality in \Ref{2}. \qed

\begin{rem}
{ It is interesting that in order
to prove monotonicity in each coordinate
 direction, we have to consider both directions simultaneously
and we have to involve a speed in one of the two directions.}
\end{rem}

\section{Limit theorems for generalized Eulerian numbers}\
\label{Sec limit theorems} {To study asymptotics of the ratios
 discussed in the preceding section, we
first determine their limits in the coordinate directions.}
\begin{prop}
\label{prop limit}
 For fixed $i \geq 0$,
 \be\label{first}
\frac{\BB (i,j)}{A_{p,q-1}(i,j)} \searrow \frac{p+q+i+1}{p+q+1} \quad \text{ as } j \to \infty; \en
 while for fixed $j\geq 0$,
\be\label{second} \frac{\BB (i,j)}{A_{p-1,q}(i,j)} \searrow \frac{p+q+j+1}{p+q+1} \quad \text{ as } i
\to \infty. \en
\end{prop}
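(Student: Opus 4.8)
The plan is to read the symbol $\searrow$ as two separate statements---that the sequence is nonincreasing, and that it has the stated limit---and to prove them one at a time. The monotonicity is already in hand: the left-hand inequality of Corollary \ref{cor monot} asserts exactly that $\BB(i,j)/A_{p,q-1}(i,j)$ is nonincreasing in $j$ for each fixed $i$. Being a monotone sequence of positive numbers it converges, so for \Ref{first} it remains only to identify the limit; \Ref{second} will then follow from \Ref{first} by symmetry.

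To evaluate $\lim_{j\to\infty}\BB(i,j)/A_{p,q-1}(i,j)$ I would extract the leading asymptotics of numerator and denominator from the closed form of Theorem \ref{Bount nw}. In \Ref{BB nw} the summand indexed by $t$ carries the exponential $(p+1+t)^{i+j}$ times the binomial coefficient $\binom{p+q+i+j+2}{i-t}$, which is a polynomial in $j$ of degree $i-t$. Because $p+1+t$ strictly increases with $t$, for each $t<i$ the ratio $(p+1+t)^{j}/(p+1+i)^{j}$ decays geometrically and so overwhelms the polynomial factor; hence the term $t=i$ dominates and \[ \BB(i,j)=\binom{p+q+i+1}{i}\,(p+1+i)^{i+j}\,\bigl(1+o(1)\bigr). \] Replacing $q$ by $q-1$ alters only the two binomial coefficients and leaves the dominant base $p+1+i$ unchanged, so $A_{p,q-1}(i,j)=\binom{p+q+i}{i}(p+1+i)^{i+j}(1+o(1))$. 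Dividing, the exponentials and the error terms cancel, leaving the ratio of leading coefficients \[ \frac{\binom{p+q+i+1}{i}}{\binom{p+q+i}{i}}=\frac{p+q+i+1}{p+q+1}, \] which is the limit in \Ref{first}.

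For \Ref{second} I would invoke the symmetry $\BB(i,j)=A_{q,p}(j,i)$ underlying Corollary \ref{cor interchange i,j}, that is, the invariance of \Ref{erel} and \Ref{initial} under $(p,q,i,j)\to(q,p,j,i)$. Under it $A_{p-1,q}(i,j)=A_{q,p-1}(j,i)$, so $\BB(i,j)/A_{p-1,q}(i,j)=A_{q,p}(j,i)/A_{q,p-1}(j,i)$; letting $i\to\infty$ with $j$ fixed is precisely \Ref{first} with the parameters $(p,q,i)$ replaced by $(q,p,j)$, whose limit is $(q+p+j+1)/(q+p+1)$, the value asserted in \Ref{second}. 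The accompanying monotonicity in $i$ transfers from Corollary \ref{cor monot} through the same symmetry.

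The only step requiring genuine care is the domination claim, namely that the partial sum over $t<i$ in \Ref{BB nw} is negligible against the $t=i$ term. Since that sum is alternating, I would not argue term by term but would bound the absolute value of the whole non-dominant contribution, using that $\binom{p+q+i+j+2}{i-t}=O(j^{i-t})$ while $\bigl((p+1+t)/(p+1+i)\bigr)^{j}$ decays geometrically for every $t<i$; this yields a uniform $o(1)$ and justifies the two displayed asymptotics. Everything else is the routine binomial simplification carried out above.
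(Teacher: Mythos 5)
Your proof is correct and follows essentially the same route as the paper: the decreasing property comes from Theorem \ref{thm monotonicity} (via Corollary \ref{cor monot}), the limit from the dominant $t=i$ term in Formula \Ref{BB nw} with quotient $\binom{p+q+i+1}{i}/\binom{p+q+i}{i}=(p+q+i+1)/(p+q+1)$, and the symmetry $(p,q,i,j)\to(q,p,j,i)$ behind Corollary \ref{cor interchange i,j} for \Ref{second}. The only difference is that you spell out the polynomial-versus-geometric domination estimate showing the $t<i$ terms are negligible, which the paper leaves implicit.
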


\begin{proof}
The dominant term in both the numerator and denominator of (\ref{first}) occurs when $t=i$ in Formula
\Ref{BB nw}, and the quotient of the two is $(p+q+i+1)/(p+q+1)$. {The fact that this
limit} is a decreasing limit follows from Theorem \ref{thm monotonicity} .

For Formula (\ref{second}), we interchange the roles of $p$ and $q$ and use Corollary \ref{cor
interchange i,j}.
\end{proof}

\begin{thm}
\label{most} Both ratios \be \frac{\BB (i,j)}{A_{p,q-1}(i,j)} \qquad \text{ and} \qquad \frac{\BB
(i,j)}{A_{p-1,q}(i,j)} \en tend to $\infty$ as {both
 $i,j$ tend to infinity;}
that is, given $M$ there are $I,J$ such that each ratio is greater than $M$ whenever $i\geq I$ and $j
\geq J$.
\end{thm}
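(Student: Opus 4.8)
The plan is to reduce to a single ratio by symmetry and then combine the two facts already in hand: the coordinatewise monotonicity of the ratio (Corollary \ref{cor monot}) and its decreasing limit along a coordinate axis (Proposition \ref{prop limit}). By the symmetry $(p,q,i,j)\to(q,p,j,i)$ exploited in Corollary \ref{cor interchange i,j}, the second ratio is obtained from the first by interchanging the roles of $i$ and $j$, so it suffices to treat $\BB(i,j)/A_{p,q-1}(i,j)$. Write $R(i,j)=\BB(i,j)/A_{p,q-1}(i,j)$ for brevity.

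First I would translate Corollary \ref{cor monot} into statements about the monotonicity of $R$. The left inequality reads $R(i,j+1)\le R(i,j)$, so $R$ is nonincreasing in $j$ for each fixed $i$. The right inequality reads $R(i,j)\le \frac{q+j}{q+1+j}R(i+1,j)$; since $\frac{q+1+j}{q+j}\ge 1$, this rearranges to $R(i+1,j)\ge \frac{q+1+j}{q+j}R(i,j)\ge R(i,j)$, so $R$ is nondecreasing in $i$ for each fixed $j$.

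The key observation is that Proposition \ref{prop limit} provides a lower bound on $R$ that is uniform in $j$ and grows with $i$. For each fixed $i$ the sequence $R(i,j)$ decreases as $j\to\infty$ to $(p+q+i+1)/(p+q+1)$; since a decreasing sequence lies above its limit,
\[
R(i,j)\ \ge\ \frac{p+q+i+1}{p+q+1}\qquad\text{for all } j\ge 0 .
\]
Combining this with monotonicity in $i$, for $i\ge I$ and every $j$ we get $R(i,j)\ge R(I,j)\ge (p+q+I+1)/(p+q+1)$.

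To finish, given $M$ I would simply choose $I$ so large that $(p+q+I+1)/(p+q+1)>M$, i.e. $I>(p+q+1)(M-1)$; then $R(i,j)>M$ whenever $i\ge I$, for \emph{every} $j$, and in particular for all $j\ge J$ with any choice of $J$. The claim for the second ratio follows at once from the symmetry noted above. I do not expect a genuine obstacle here, since the substantive analytic content lives in Theorem \ref{thm monotonicity} and Proposition \ref{prop limit}; the only point requiring care is bookkeeping the direction of the inequalities in Corollary \ref{cor monot} correctly and checking that a decreasing limit furnishes a pointwise lower bound, which then makes the linear growth of that bound in $i$ do all the work.
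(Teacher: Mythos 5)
Your proof is correct and follows essentially the same route as the paper: it combines the coordinatewise monotonicity from Corollary \ref{cor monot} with the decreasing limits of Proposition \ref{prop limit} to get a lower bound on the ratio that grows linearly in $i$ (uniformly in $j$), then chooses $I$ accordingly. The only cosmetic difference is that you handle the second ratio via the $(p,q,i,j)\to(q,p,j,i)$ symmetry, whereas the paper simply invokes the analogous ("similar") argument using the second half of Proposition \ref{prop limit}.
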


\begin{proof}
For the first ratio, given $M$ choose $I$ so that $(p+q+I-1)/(p+q-1)
> M$. Then ${\BB (i,j)}/{A_{p,q-1}(i,j)} > M$ for all $i,j$ with $i
\geq I$ according to Proposition \ref{prop limit} and Corollary \ref{cor monot}. A similar argument
applies to the second ratio.
\end{proof}

\section{Good paths}
\label{sec good}

For any vertex  of the Euler graph, we fix an order on the set of all horizontal edges exiting that
vertex and an order on the set of all vertical edges exiting that vertex.

Let $\cs_{p,q}(i,j)$ be the set of all paths from $(p,q)$ to $(p+i,q+j)$.  We will define a subset
$\cg_{p,q}(i,j) \subset \cs_{p,q}(i,j)$ of { good} paths.  The number of elements in $\cs_{p,q}(i,j)$
is the Eulerian number $\BB(i,j)$. The number of elements in $\cg_{p,q}(i,j)$ will be denoted by
$G_{p,q}(i,j)$.

Fix $(p,q)$ and call it a {\em base point}. For any $k,l\geq 0$, the
vertex $(p+k,q+l)$ has $p+q+k+l+2$ edges leaving it, $q+l+1$
horizontally and $p+k+1$ vertically.  We label the first $q+1$
horizontal edges by symbols $s_1,\dots,s_{q+1}$, respectively, and the
first $p+1$ vertical edges by symbols $s_{q+2},\dots,s_{p+q+2}$,
respectively.

A path $x\in \cs_{p,q}(i,j)$ is a sequence of edges each of which is
labeled or not.  A path is called {\it good} if its edges have each of
the labels $s_1,\dots,s_{p+q+2}$ at least once.

The subset $\cg_{p,q}(i,j)$ of good paths is nonempty if and only if
$i\geq q+1$ and $j\geq p+1$.

\begin{thm}
\label{thm on good}
For fixed $(p,q)$, \bean \label{form g=a}
\frac{G_{p,q}(i,j)}{A_{p,q}(i,j)}\ \to\ 1 \eean as both $i,j$ tend to
infinity.
\end{thm}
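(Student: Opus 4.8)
The plan is to prove the complementary estimate $\bigl(A_{p,q}(i,j)-G_{p,q}(i,j)\bigr)/A_{p,q}(i,j)\to 0$ by bounding the number of \emph{bad} paths, i.e.\ those that miss at least one label. For $1\le r\le p+q+2$ let $B_r\subset \cs_{p,q}(i,j)$ be the set of paths that never use the label $s_r$. A path is bad exactly when it lies in $\bigcup_r B_r$, so the union bound gives
\[
A_{p,q}(i,j)-G_{p,q}(i,j)=\#\Bigl(\bigcup_{r=1}^{p+q+2}B_r\Bigr)\ \le\ \sum_{r=1}^{p+q+2}\#B_r .
\]
Since $p,q$ are fixed, this is a sum of a constant number of terms, so it suffices to show $\#B_r/A_{p,q}(i,j)\to 0$ for each single label $s_r$.

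The heart of the argument is to recognize each $\#B_r$ as a shifted Eulerian number. Fix a horizontal label $s_m$ with $1\le m\le q+1$. At every vertex $(p+k,q+l)$ the horizontal multiplicity is $q+l+1\ge q+1\ge m$, so the edge $s_m$ is present everywhere; forbidding it lowers the horizontal multiplicity by exactly one at every vertex and leaves the vertical multiplicities unchanged. Consequently the counts $\#B_m(i,j)$ satisfy
\[
\#B_m(i,j)=(q+j)\,\#B_m(i-1,j)+(p+i+1)\,\#B_m(i,j-1),
\]
which is precisely recurrence \Ref{erel} for the base point $(p,q-1)$. The initial conditions match as well: a purely horizontal path to $(p+i,q)$ must avoid $s_m$ at each of its $i$ steps, giving $q^i=A_{p,q-1}(i,0)$, while a purely vertical path uses no horizontal edge and is therefore automatically counted, giving $(p+1)^j=A_{p,q-1}(0,j)$. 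By uniqueness of the solution of \Ref{erel} with prescribed initial data we conclude $\#B_m(i,j)=A_{p,q-1}(i,j)$. The symmetric computation (interchanging the coordinate directions, as in Corollary \ref{cor interchange i,j}) shows that for each vertical label $s_m$, $q+2\le m\le p+q+2$, one has $\#B_m(i,j)=A_{p-1,q}(i,j)$.

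Combining these identities with the union bound yields
\[
A_{p,q}(i,j)-G_{p,q}(i,j)\ \le\ (q+1)\,A_{p,q-1}(i,j)+(p+1)\,A_{p-1,q}(i,j),
\]
and hence
\[
1-\frac{G_{p,q}(i,j)}{A_{p,q}(i,j)}\ \le\ (q+1)\,\frac{A_{p,q-1}(i,j)}{A_{p,q}(i,j)}+(p+1)\,\frac{A_{p-1,q}(i,j)}{A_{p,q}(i,j)} .
\]
By Theorem \ref{most} both ratios on the right tend to $0$ as $i,j\to\infty$, so the right-hand side (a fixed finite combination) tends to $0$ and the theorem follows.

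The step I expect to be the main obstacle is the exact identification $\#B_m=A_{p,q-1}$ together with its vertical analogue: one must verify both that forbidding a single labeled edge reduces the appropriate multiplicity \emph{uniformly} at every vertex --- so that bad paths genuinely obey a shifted version of \Ref{erel} --- and that the boundary counts line up, so that the comparison is with $A_{p,q-1}$ (resp.\ $A_{p-1,q}$) rather than with some unrelated quantity. Once that identification is in place the rest is formal: the union bound reduces matters to the finite family of $p+q+2$ labels, and Theorem \ref{most} converts the negligibility of each shifted count into the claimed limit.
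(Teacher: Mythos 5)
Your proof is correct and takes essentially the same approach as the paper's: bound the set of bad paths by $(q+1)A_{p,q-1}(i,j)+(p+1)A_{p-1,q}(i,j)$ and then apply Theorem \ref{most} to conclude that this is negligible relative to $A_{p,q}(i,j)$. The only difference is one of detail, and it is to your credit: where the paper dismisses the key bound with ``it is clear,'' you justify it by identifying the count $\#B_m$ of paths avoiding a fixed horizontal label exactly with $A_{p,q-1}(i,j)$ (via the shifted recurrence and matching initial conditions, using uniqueness), together with the symmetric identification for vertical labels, which is a valid and complete verification.
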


\begin{proof}
It is clear that the number of elements of the set $\cs_{p,q}(i,j) \setminus \cg_{p,q}(i,j)$ of bad
paths is not greater than { $(q+1)A_{p,q-1}(i,j)+(p+1)A_{p-1,q}(i,j)$. By Theorem \ref{most}, the ratio
\bea \frac{(q+1)A_{p,q-1}(i,j)+(p+1)A_{p-1,q}(i,j)}{A_{p,q}(i,j)} \to 0 \quad\text{as}\quad i,j \to
\infty\ .
 \eea
This implies the theorem.}
\end{proof}

\section{A one-to-one correspondence between two sets of good paths}
\label{sec identif}

\begin{thm}
\label{thm good are equal} The number $G_{p,q}(i-p,j-q)$ of good
paths from $(p,q)$ to $(i,j)$ is equal to the number
$G_{p',q'}(i-p',j-q')$ of good paths from $(p',q')$ to $(i,j)$, if
$p+q=p'+q'$ and $i,j \geq p+q+2$.
\end{thm}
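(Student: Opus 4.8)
The plan is to prove Theorem \ref{thm good are equal} by establishing a single fundamental bijection and then iterating it. Because $p+q=p'+q'$ with both lying at the same level, it suffices by transitivity to treat the case $(p',q')=(p+1,q-1)$, i.e.\ to show $G_{p,q}(i-p,j-q)=G_{p+1,q-1}(i-p-1,j-q+1)$ whenever $i,j\geq p+q+2$. So first I would set up the two label schemes: from the base point $(p,q)$ the first $q+1$ horizontal and first $p+1$ vertical edges out of each reachable vertex carry the labels $s_1,\dots,s_{p+q+2}$, while from $(p+1,q-1)$ the \emph{same} total number $p+q+2$ of labels is distributed as $q$ horizontal and $p+2$ vertical. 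A good path in either scheme is one that realizes all $p+q+2$ labels at least once.

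\smallskip

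Next I would exhibit the correspondence explicitly. A path from a base point is a sequence of $i-p$ horizontal and $j-q$ vertical steps, each step carrying the label determined by the order in which it is taken among like-oriented edges leaving its current vertex. The natural map sends the step-sequence of a path from $(p,q)$ to the step-sequence of a path from $(p+1,q-1)$ by re-reading the same combinatorial choices under the shifted labeling, after adjusting the first horizontal and first vertical steps so that the endpoint $(i,j)$ is preserved. Concretely, I expect the correspondence to match good paths that use the ``extra'' horizontal label $s_{q+1}$ (present for base point $(p,q)$ but absent for $(p+1,q-1)$) against good paths that use the ``extra'' vertical label $s_{p+q+2}$ (the new vertical label present for $(p+1,q-1)$). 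The hypothesis $i,j\geq p+q+2$ guarantees there is enough room in each coordinate direction for all the required labels to appear, so that both sets of good paths are nonempty and the map is defined on all of $\cg_{p,q}(i-p,j-q)$.

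\smallskip

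The main obstacle, which I would attack most carefully, is verifying that the map is a genuine bijection that respects \emph{goodness}: I must check that a path uses every label $s_1,\dots,s_{p+q+2}$ in the source scheme if and only if its image uses every label in the target scheme, and that the step-by-step relabeling is consistent along the whole path (the label a step receives depends on how many like-oriented edges were already used at that vertex, and this count must transform correctly under the shift). I anticipate organizing this as a two-dimensional induction on $(i,j)$, peeling off the last step and using the edge-multiplicities $q+j+1$ horizontal and $p+i+1$ vertical, so that the recurrence relation \Ref{erel} structure is mirrored by a recurrence for $G$. The delicate point is the boundary between ``generic'' vertices, where the relabeling is a clean shift, and the first occurrences of the extra labels, where the bijection must carefully trade one direction's surplus for the other's; here the level condition $i,j\geq p+q+2$ is exactly what makes the counts on both sides agree. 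Once the bijection on the single step $(p,q)\to(p+1,q-1)$ is established, iterating it along the level $p+q$ and invoking transitivity yields the full statement.
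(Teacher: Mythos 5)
There is a genuine gap here: you have correctly located the hard point (consistency of the relabeling along the whole path), but the two mechanisms you propose to resolve it would both fail, and the idea that actually makes the bijection work is missing. First, the map cannot be ``re-reading the same combinatorial choices after adjusting the first horizontal and first vertical steps.'' Under the two labeling schemes the label $s_{q+1}$ is horizontal for base point $(p,q)$ but vertical for base point $(p+1,q-1)$, so the step whose orientation must flip is the step carrying that label, \emph{wherever} along the path it occurs; from that step onward the image path visits entirely different vertices than the original, so one needs a rule for reconstructing the remainder of the path at those new vertices, not a relabeling of the same step sequence. Second, your fallback --- a two-dimensional induction peeling off the last step so that ``the recurrence relation \Ref{erel} structure is mirrored by a recurrence for $G$'' --- does not get off the ground, because goodness is not inherited by prefixes: deleting the last edge of a good path need not leave a good path (that edge may carry a label used nowhere else), so $G_{p,q}$ satisfies no recurrence of the form \Ref{erel}. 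One would have to augment the induction by the subset of labels already used, or set up an inclusion--exclusion over such subsets, which is a substantially different argument from the one you sketch. A further complication you overlook: with the static labels of Section \ref{sec good}, a path may use the \emph{same} label at several different vertices, so even at ``generic'' vertices the relabeling is not a clean shift.

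The missing idea (and the paper's actual proof) is a \emph{dynamic} encoding. As the path is traversed, once a label $s_a$ has been used it is erased from all later vertices, so each $s_a$ is used at most once; each edge is then encoded either by its still-present label $s_a$ (``marked'' edge), or, if unmarked, by its rank among the unmarked horizontal (respectively vertical) edges at its vertex, producing a word $D(x)$ in symbols $s_a$, $h_a$, $v_a$. The key invariant (Lemma \ref{lem on D}) is that the numbers of unmarked horizontal and vertical edges leaving the current vertex are determined by the encoding prefix alone, depending on the base point only through $n=p+q$; hence $D(x)$ can be decoded uniquely starting from \emph{any} base point at level $n$, giving the bijection at one stroke (your reduction to adjacent base points, while legitimate, is then unnecessary). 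Goodness is preserved because it is equivalent to every $s_a$ appearing in $D(x)$, and the decoded path ends at the same $(i,j)$ precisely because a good path uses each label exactly once: the marked horizontal steps number $q+1$ for $x$ but $q'+1$ for $y$, while unmarked steps keep their orientation, so the horizontal step count shifts from $i-p$ to $i-p'$ exactly as required. This orientation-switching of marked steps, governed by the decoding invariant, is the content your proposal defers to ``the main obstacle'' without supplying a way through it.
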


\begin{proof}
Denote $n=p+q=p'+q'$.

Take $(p,q)$ as a base point. For any $k,l\geq 0$, using that base
point define (as in Section \ref{sec good}) the labeled edges
$s_1,\dots,s_{n+2}$ exiting {any} vertex $(p+k,q+l)$.

Similarly, take $(p',q')$
as a base point.  For any $k,l\geq 0$, using that base point define (as in
Section \ref{sec good} the labeled edges $s_1,\dots,s_{n+2}$ exiting
{any} vertex $(p'+k,q'+l)$.

For any good path $x\in \cg_{p,q}(i-p,j-q)$ from $(p,q)$ to $(i,j)$
 (with respect to the first labels) we will construct a good path
 $y\in \cg_{p',q'}(i-p',j-q')$ from $(p',q')$ to $(i,j)$ (with respect
 to the second labels). This construction will establish a bijection
 between the corresponding sets of good
{ paths.}

{
Let $x=(E_1,E_2,\dots,E_r)$, $r=i+j-p-q$,}
be a path from $x_0=(p,q)$ to $x_r=(i,j)$, where $E_m$ are
edges and for every
 $m$, the edge $E_m$ connects a vertex $x_{m-1}$ of level $p+q+m-1$ to a vertex
$x_m$ of level { $p+q+m$. We will} encode the path $x$ by a new sequence of symbols
$D(x)=(D_1,\dots,D_r)$, called the {\em encoding sequence} of $x$, with each
{label} $D_i$ coming from
an alphabet \be \cd=\{s_1,s_2,\dots ; h_1,h_2,\dots ; v_1,v_2, \dots\}, \en
 as follows.

For any $m$ and any $a$, if one of the edges $E_1,\dots,E_m$ has label
$s_a$, then we unlabel the edge exiting $x_{m}$ with label $s_a$. This
procedure decreases the number of labeled edges exiting $x_{m}$.
{The
edges exiting $x_{m}$ which remain labeled will be called the {\em marked} edges.
The
 edges exiting $x_{m}$ which lost a label or were initially unlabeled will be
called the {\em unmarked} edges.}

{
 For any $m$, we set $D_m=D(E_m)$ to be $s_a$ if $E_m$ is a marked edge with label $s_a$.
 { Now we label the unmarked edges.}
 We
set $D_m$ to be $h_a$ if $E_m$ is the $a$'th horizontal unmarked edge among the set of horizontal
unmarked edges. We set $D_m$ to be $v_a$ if $E_m$ is the $a$'th vertical unmarked edge among the set of
vertical unmarked edges. (Recall that the set of horizontal edges exiting each vertex is ordered and
the set of vertical edges exiting each vertex is ordered, so that the notion of the $a$'th edge among
the unmarked horizontal (or vertical) edges is well defined.) }

\begin{lem}
\label{lem on D}
Let $x$ be a path from $(p,q)$ to $(i,j)$. Let $D(x)=(D_1,\dots,D_r)$ be its encoding
sequence. Then for any $m\geq 1$, we have the following two statements:
\begin{enumerate}
\item
The number of horizontal unmarked edges exiting $x_m$ equals the sum of the number
{of
marked}
edges among $E_1,\dots,E_m$ and the number of vertical unmarked edges among $E_1,\dots,E_m$.
\item
The number of the vertical unmarked edges exiting $x_m$ equals the sum of the number
{of
marked} edges among $E_1,\dots,E_m$ and the number of horizontal
unmarked edges among $E_1,\dots,E_m$.
\end{enumerate}
\end{lem}

\begin{proof}
 For any $m$, denote by $H_m$ and $V_m$ the numbers of unmarked horizontal and
unmarked vertical edges leaving $x_m$, respectively. {
 If $E_m$ is a marked edge, then
$H_m=H_{m-1}+1$ and $V_m=V_{m-1}+1$.  If $E_m$ is a horizontal unmarked edge, then $H_m=H_{m-1}$ and
$V_m=V_{m-1}+1$.  If $E_m$ is a vertical unmarked edge, then $H_m=H_{m-1}+1$ and $V_m=V_{m-1}$. }
\end{proof}

Similarly to the above construction, for any path $y$
{ from $(p',q')$ to $(i,j)$}
we can define its encoding sequence $D(y)=(D_1,\dots,D_r)$,
using the labels with respect to the base point $(p',q')$.  Again
every $D_m$ is $s_a,h_a$ or $v_a$ for a suitable $a$.

\begin{lem}\label{lem well defined}
There is a bijection
\bea
B\ :\ \cg_{p,q}(i-p,j-q)\ \to\ \cg_{p',q'}(i-p',j-q')\ , \quad
 x\ \mapsto\ y\ ,
\eea
that is well defined by choosing $y$ to satisfy the condition $D(y)=D(x)$.
\end{lem}
\begin{proof}

Let $x=(E_1,\dots ,E_r)$ be a path from
$x_0=(p,q)$ to $x_r=(i,j)$ with encoding sequence $D(x)=(D_1,\dots,D_r)$.
We need to show that there exists a unique path
$y=(E_1',\dots ,E_r')$ from $y_0=(p',q')$ to $y_r=(i,j)$ with encoding sequence
$D(y)$ such that $D(y)=D(x)$.
We  prove the existence of edges $E'_m$ by induction on $m$.

 All edges exiting $x_0$ and $y_0$ are marked. In both cases
the marks are $s_1,\dots,s_{n+2}$
where $n=p+q=p'+q'$. If $E_1$ has a mark $s_a$, then $E'_1$ is chosen to be the edge
exiting $y_0$ with mark $s_a$.

Assume that for some $m>1$ a path
$(E'_1,\dots,E'_{m-1})$ from $y_0$ to $y_m$ is constructed so that
$D(E'_1,\dots,E'_{m-1})=D(E_1,\dots,E_{m-1})$. By Lemma \ref{lem on
  D}, the vertices $y_m$ and $x_m$ have the same number of exiting
unmarked  horizontal edges, the same number of exiting unmarked vertical edges, and the
same number of exiting marked edges. Moreover, the exiting edges from $x_m$ and $y_m$
have exactly the same set of labels.
 Hence, for any $D(E_m)$ there exists a unique edge $E'_m$ exiting
$y_m$ with $D(E'_m)=D(E_m)$.

Thus, there exists a unique path $y=(E_1',\dots ,E_r')$ from $y_0=(p',q')$
such that $D(y)=D(x)$. It is easy to see that $y$ ends at $(i,j)$.
\end{proof}

{
 Lemma \ref{lem well defined} implies Theorem \ref{thm good are equal}. }
\end{proof}

\begin{thm}
\label{last thm} \bea \label{formula last}
\frac{\BB (i,j)}{A_{0,0}(p+i,q+j)}
\quad {\rm tends\ to}
\quad
{\frac1{(p+q+1)!}}
\eea
 as both $i,j$ tend to infinity.
\end{thm}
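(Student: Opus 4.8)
The plan is to decompose every path from the root to $(p+i,q+j)$ according to the vertex it visits at level $n:=p+q$, and then to replace each piece by its good-path count using the two theorems just proved. Writing $I=p+i$ and $J=q+j$ for the coordinates of the endpoint, every path from $(0,0)$ to $(I,J)$ passes through exactly one vertex $(a,b)$ with $a+b=n$, so concatenation of path segments gives
\[
A_{0,0}(I,J)\ =\ \sum_{a+b=n} A_{0,0}(a,b)\,A_{a,b}(I-a,J-b),
\]
where the inner factor $A_{a,b}(I-a,J-b)$ is precisely the number of paths from the level-$n$ vertex $(a,b)$ to $(I,J)$.

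First I would record the elementary identity $\sum_{a+b=n} A_{0,0}(a,b)=(n+1)!$. This follows by induction on the level: each vertex at level $m$ has $m+2$ edges leaving it, so the total number of paths from the root to level $m+1$ equals $(m+2)$ times the total to level $m$; starting from the single path at level $0$ gives $(m+1)!$ paths reaching level $m$, which is exactly the displayed sum. (Equivalently, one may invoke the classical fact that the Eulerian numbers $A_{0,0}(a,b)$ with $a+b=n$ sum to $(n+1)!$.)

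Next I would bring in the good paths. Fix $i,j$ large enough that $I,J\ge n+2$. By Theorem \ref{thm good are equal} the good-path count from a level-$n$ vertex to $(I,J)$ does not depend on which level-$n$ vertex we start from; call this common value $g$, so that $g=G_{a,b}(I-a,J-b)$ for every $(a,b)$ with $a+b=n$, and in particular $g=G_{p,q}(i,j)$. Dividing the decomposition by $g$ gives
\[
\frac{A_{0,0}(I,J)}{g}\ =\ \sum_{a+b=n} A_{0,0}(a,b)\,\frac{A_{a,b}(I-a,J-b)}{G_{a,b}(I-a,J-b)}.
\]
By Theorem \ref{thm on good} each ratio on the right tends to $1$ as $i,j\to\infty$; since the sum has only $n+1$ terms, the right-hand side tends to $\sum_{a+b=n}A_{0,0}(a,b)=(n+1)!$. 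Hence $A_{0,0}(I,J)/g\to(n+1)!$, while $A_{p,q}(i,j)/g=A_{p,q}(i,j)/G_{p,q}(i,j)\to 1$ by the same theorem. Combining these two limits yields
\[
\frac{A_{p,q}(i,j)}{A_{0,0}(p+i,q+j)}\ \to\ \frac{1}{(n+1)!}\ =\ \frac{1}{(p+q+1)!}.
\]

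I do not expect a serious obstacle, since the two substantive inputs --- that good paths asymptotically exhaust all paths (Theorem \ref{thm on good}) and that their number depends only on the level of the starting vertex (Theorem \ref{thm good are equal}) --- are already in hand. The one point requiring care is conceptual rather than technical: the argument hinges on factoring the finite level-$n$ sum through the \emph{single} common good-path count $g$, which is exactly what forces the constant to be the level sum $(n+1)!$ rather than something depending on the individual terms $A_{0,0}(a,b)$. Beyond that, the only remaining verifications are the trivial interchange of a limit with a finite sum and the elementary $(n+1)!$ identity.
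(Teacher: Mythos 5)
Your proof is correct and is essentially the paper's own argument: the paper simply declares Theorem \ref{last thm} to be a direct corollary of Theorems \ref{thm on good} and \ref{thm good are equal}, and the details you supply --- the decomposition of $A_{0,0}(p+i,q+j)$ over the level-$(p+q)$ vertices, the common good-path count $g$, and the identity $\sum_{a+b=p+q}A_{0,0}(a,b)=(p+q+1)!$ --- are exactly the implicit steps behind that claim. No gaps; your write-up just makes explicit what the paper leaves to the reader.
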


{
This theorem is a direct corollary of Theorems \ref{thm on good}
and \ref{thm good are equal}. Theorem \ref{last thm} implies
Theorem \ref{mainth}.}

\medskip
\begin{rem}
{
In this remark, we explain briefly the statements in the introduction about generic
points and directional unique ergodicity. If $x=\{(i_n,j_n), n\geq 0\}$ is generic for a
fully supported measure $\mu$, then $i_n,j_n$
must be unbounded, since otherwise using $x$ in Formula
(\ref{dimlim}) will assign measure 0 to many cylinders. Conversely,
let $x$ be any path  with $i_n,j_n$ unbounded. Let $C$ be the cylinder
determined by an initial path of length $n_0$. Then
\be
\lim_{n  \to \infty}
\frac{\dim((i_{n_0},j_{n_0}),(i_n,j_n))}{\dim(R,(i_n,j_n))}\ =\
\mu(C)=\frac{1}{(n_0+1)!} .
\en
Thus, each path with unbounded $i_n,j_n$
is generic for the symmetric measure.
}

Moreover, it is not necessary to speak of
{paths.
 If $C$ is a cylinder set with terminal vertex $P$ at level $n_0$, then
given $\epsilon > 0$ there is $M$ such that
\be
|\frac{\dim(P,(i,j))}{\dim(R,(i,j))}-\frac{1}{(n_0+1)!}| <
\epsilon
\en
for all $i,j \geq M$.}
\end{rem}

\medskip
\begin{rem} The approach presented here to prove ergodicity and unique fully supported
ergodicity of the symmetric measure on the Euler adic system was
developed so as to apply to the case of the higher-dimensional Euler
adics. It seems that the
{correspondence of good paths and monotonicity
arguments extend readily. An important step is to develop}
necessary formulas extending those for the $A_{p,q}(i,j)$.
\end{rem}

\begin{bibdiv}
\begin{biblist}
\bibselect{Bpq2009}
\end{biblist}
\end{bibdiv}

\end{document}